\numberwithin{equation}{section}
\newtheorem{thm}[equation]{Theorem}
\newtheorem{prop}[equation]{Proposition}
\newtheorem{cor}[equation]{Corollary}
\newtheorem{lem}[equation]{Lemma}
\theoremstyle{remark}
\newtheorem*{rem*}{Remark}
\theoremstyle{definition}
\newtheorem*{dfn}{Definition}
\newtheorem*{conj*}{Conjecture}
\newtheorem*{not*}{Notation}
\numberwithin{equation}{section}
\DeclareMathOperator{\Fitt}{Fitt}
\DeclareMathOperator{\GL}{GL}
\DeclareMathOperator{\Zen}{Z}
\newcommand{\Z}{\mathbb{Z}}
\newcommand{\R}{\mathbb{R}}
\newcommand{\normal}{\lhd}
\newcommand{\present}[2]{\langle #1 \mid \ #2 \rangle}
\newcommand{\gen}[1]{\langle #1 \rangle}
\begin{document}

\title{Unique product groups and congruence subgroups}

\author[W. Craig]{Will Craig}
\address{Department of Mathematics \\
University of Virginia \\
141 Cabell Drive, Kerchof Hall\\
P.O. Box 400137 \\
Charlottesville, VA 22904 \\
USA}
\email{wlc3vf@virginia.edu}

\author[P.A. Linnell]{Peter A. Linnell}
\address{Department of Mathematics \\
Virginia Tech \\
Blacksburg, VA 24061-1026 \\
USA}
\email{plinnell@math.vt.edu}

\subjclass[2010]{Primary: 20E18; Secondary: 16S34 16U60 20E18}
\keywords{unique product group, uniform pro-$p$ group,
Hantzsche-Wendt} 

\begin{abstract}
We prove that a uniform pro-$p$ group with no nonabelian free
subgroups has a normal series with torsion-free abelian factors.
We discuss this in relation to unique product groups.  We also
consider generalizations of Hantzsche-Wendt groups.
\end{abstract}

\date{Wed Sep 23 19:02:52 EDT 2020}
\maketitle

\section{Introduction}

The group $G$ is called a unique product group if given two nonempty
finite subsets $X,Y$ of $G$, then there exists at least
one element $g\in G$
which has a unique representation $g = xy$ with $x\in X$ and $y\in
Y$.  It is easy to see that a unique product group is torsion free,
though the converse is not true in general
\cite{Rips87,Promislow88}.  There has been much interest recently in
determining which torsion-free groups are unique product groups
\cite{Carter14,Kionke16,Gruber15,Steenbock15}.
Original motivation for studying unique product
groups was the Kaplansky zero divisor conjecture, namely that if
$k$ is a field and $G$ is a torsion-free group, then $kG$ is a
domain.  This and other early results are described in \cite[\S
13.1]{Passman77}, which we summarize here.  The group $G$ is called a
two unique product group if given two nonempty finite subsets
$X,Y$ of $G$ with $|X| + |Y| \ge 3$,
there exist at least two elements in $G$ which have a unique
representation of the form $xy$ with $x\in X$ and $y\in Y$.
There is also the trivial units conjecture, namely that if $k$ is a
field and $G$ is a torsion-free group, then $kG$ has only trivial
units, i.e.\ the only units of $kG$ are of the form $ag$ with $0 \ne
a \in k$ and $g\in G$.  It is shown in \cite[Lemma 13.1.9]{Passman77}
that if $G$ is a two unique product group, then $kG$ has only trivial
units.  It is clear that if $G$ is a two unique product group, then
$G$ is a unique product group.  However Strojnowski \cite[Theorem
1]{Strojnowski80} showed that all unique product groups are two unique
product groups.  He also proved that if $G$ fails to be a unique
product group, then there exists a nonempty finite subset $X$ of $G$
such that no element of $G$ has a unique representation $xy$ with
$x,y\in X$, and it follows easily from Strojnowski's proof that the
finite subset $X$ can be chosen to be arbitrarily large.

If $H\lhd G$ are groups and $H$ and $G/H$ are unique product groups,
then $G$ is a unique product group \cite[13.1.8]{Passman77}.  Also it
is not difficult to see that direct limits of unique product groups
are unique product groups.  Since $\mathbb{Z}$ is a unique product
group, it follows that if there is a normal series $1=G_0 \lhd G_1
\lhd  \dots \lhd G_n = G$ such that $G_{i+1}/G_i$ is torsion-free
abelian for all $i$, then $G$ is a unique product group.

Let $p$ be a prime.  We shall use the notation $\mathbb{Z}_p$ for the
$p$-adic integers, $\mathbb{Q}_p$ for the $p$-adic numbers, and
$\GL_n(\mathbb{Z}_p)$ for the ring of invertible $n$ by $n$ matrices
over $\mathbb{Z}_p$.  Suppose $G$ is a uniform pro-$p$ group
\cite[Definition 4.1]{Dixon99}.  Good examples of pro-$p$ groups are
congruence subgroups.  Thus $\{A \in \GL_n(\mathbb{Z}_p) \mid A
\equiv 1 \mod p\}$ for $p$ odd, and $\{A \in \GL_n(\mathbb{Z}_2) \mid A
\equiv 1 \mod 4\}$ are uniform pro-$p$ groups \cite[Theorem
5.2]{Dixon99}.  Then $kG$ is a domain for all fields $k$ of
characteristic 0 or $p$ \cite[Theorem 1.3, remark to Proposition
6.4]{FarkasLinnell06}, \cite[Theorem 8.7.8]{Wilson98} and
\cite[Corollary 7.26]{Dixon99} (though it is
unknown whether this remains true for fields of nonzero
characteristic not equal to $p$).  We make the following conjecture:
\begin{conj*}
If $G$ is a uniform pro-$p$ group, then $G$ is a unique
product group.  
\end{conj*}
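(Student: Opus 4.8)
The plan is to build, for every uniform pro-$p$ group $G$, a normal series $1 = G_0 \lhd G_1 \lhd \dots \lhd G_n = G$ with each quotient $G_{i+1}/G_i$ torsion-free abelian; by the criterion recalled above (closure of the unique product property under extensions and under direct limits, together with the fact that $\mathbb{Z}$ is a unique product group) this would immediately yield the conjecture. The first thing to notice is that such a series forces $G$ to be solvable, so this line of attack can only succeed when $G$ is solvable. That case, however, is exactly what the main theorem of this paper settles: a uniform pro-$p$ group with no nonabelian free subgroup admits a normal series with torsion-free abelian factors, and since a solvable group contains no nonabelian free subgroup, the theorem applies to every solvable uniform pro-$p$ group and furnishes the required series. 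I would therefore dispose of the solvable case simply by quoting that theorem.

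The genuine content of the conjecture thus lies in the opposite case, where $G$ contains a nonabelian free subgroup --- for instance a congruence subgroup $\{A \in \GL_n(\mathbb{Z}_p) : A \equiv 1 \bmod p\}$ with $n \ge 2$ and $p$ odd, which is uniform but visibly non-solvable. Here no torsion-free abelian normal series can exist, so the extension-and-direct-limit machinery must be replaced by a different sufficient condition for the unique product property. My proposal is to prove that $G$ is bi-orderable, i.e.\ carries a total order invariant under both left and right multiplication; it is classical that a bi-orderable group is a unique product group (by comparing the products of the largest elements of $X$ and of $Y$), and, encouragingly, nonabelian free groups are themselves bi-orderable, so the presence of free subgroups is no obstruction in principle.

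To produce the order I would pass through Lazard's correspondence. For a uniform pro-$p$ group the map $\log$ is a bijection onto a powerful $\mathbb{Z}_p$-Lie algebra $L \cong \mathbb{Z}_p^{\dim G}$, and multiplication in $G$ is transported to the Baker--Campbell--Hausdorff operation $x * y = x + y + \tfrac12[x,y] + \cdots$ on $L$, in which every correction term after $x+y$ lies in a strictly deeper power of $p$ because $[L,L] \subseteq pL$. The idea is to order $L$ by a leading $p$-adic datum so that the abelian part $x+y$ dominates the higher brackets and the order is respected by $*$; combined with the filtration of $G$ by the lower $p$-central (dimension) subgroups $G = P_1 \supseteq P_2 \supseteq \cdots$ with $\bigcap_i P_i = 1$, one would hope to assemble a bi-order lexicographically along the filtration.

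The hard part is precisely this last step, and it is why the statement is still only a conjecture. The graded pieces $P_i/P_{i+1}$ are finite elementary abelian $p$-groups, hence have torsion and cannot be ordered, so the naive lexicographic construction collapses; and orders on the additive group $\mathbb{Z}_p$ are necessarily discontinuous for the $p$-adic topology, so there is no canonical leading-term order to exploit. Worse, when $L$ is perfect --- as for the $\mathfrak{sl}_n(\mathbb{Z}_p)$-type algebras underlying the congruence subgroups above --- there is no descending filtration of $L$ with abelian quotients along which to run the lexicographic argument at all. Reconciling the $\mathbb{Z}_p$-module structure and the $p$-adic convexity of the $\mathrm{BCH}$ series with the existence of a genuinely bi-invariant order is the central obstacle; overcoming it, or else finding a direct Strojnowski-style combinatorial argument carried out in $L$ that bypasses orderability altogether, is what a full proof would require.
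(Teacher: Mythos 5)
The statement you were asked to prove is, in the paper itself, an \emph{open conjecture}: the authors say explicitly that they cannot answer it, and what they actually establish are partial results --- Theorems \ref{T:VAbel}, \ref{T:VNil}, \ref{T:VSolvable} and Corollary \ref{C:Nofree} --- covering exactly the case you dispose of first. By the Tits alternative, a subgroup of a uniform pro-$p$ group with no nonabelian free subgroup is virtually solvable, and then $\Fitt(G)$ is nilpotent (and torsion-free, hence has an upper central series with torsion-free abelian factors) while $G/\Fitt(G)$ is torsion-free abelian; the extension property of unique product groups finishes that case. Your reduction of the solvable case to this theorem is correct, and your observation that the normal-series strategy cannot possibly reach beyond solvable groups is exactly right --- it is the reason the paper stops where it does.

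For the remaining case your plan --- prove bi-orderability of $G$ via the Lazard correspondence and the Baker--Campbell--Hausdorff operation on $L=\log G$ --- is a sensible and well-informed line of attack (a bi-orderable group is indeed a unique product group, and free groups are bi-orderable, so free subgroups are no a priori obstruction), but as you yourself concede it is not carried out: the lexicographic construction fails because the graded pieces of the dimension-subgroup filtration are finite elementary abelian $p$-groups, and for the Lie algebras underlying congruence subgroups of $\GL_n(\mathbb{Z}_p)$, $n\ge 2$, there is no abelian filtration to induct along. So your attempt has a genuine gap --- the entire non-solvable case --- but this is not a defect you could have been expected to repair: the paper contains no proof of the statement either, and the obstruction you diagnose is precisely why it is labelled a conjecture. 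What you have correctly reconstructed is the paper's partial result together with an accurate account of where the difficulty lies; you should simply not present this as a proof of the conjecture, because it is not one, and none is known.
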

We cannot answer this conjecture, however we verify
that if $H$ is a virtually solvable subgroup of $G$, then $H$ is a
unique product group. In fact  we show that $H$ has an invariant series
$1 = H_0 \lhd  H_1 \lhd H_2 \lhd \dots \lhd H_n =H$ where $H_i \lhd H$
and  $H_{i+1}/H_i$ is torsion free abelian for
all $i$.  We will also show that if $H$ is virtually nilpotent, then
$H$ is nilpotent, and also if $H$ is virtually abelian, then $H$ is
abelian.  More precise statements are given in Theorems
\ref{T:VAbel}, \ref{T:VNil} and Corollary \ref{C:Nofree}.
The group $\langle x,y \mid x^{-1}y^2xy^2\rangle$, often called the
Hantzsche-Wendt group, was shown by Promislow \cite{Promislow88} to
be a nonunique product group.  This group is virtually abelian
but not abelian, so cannot be a subgroup of a uniform pro-$p$ group.
In the final section we consider some generalizations of the
Hantzsche-Wendt group.

Motivation for this paper is a result concerning amenable groups with
a locally invariant order \cite[Theorem 1.1]{Witte14}.
A locally invariant order $<$ on the group $G$ is a strict partial
order $<$ on $G$ with the property that for all $x,y \in G$ with
$y\ne 1$, either $xy > x$ or $xy^{-1}> x$.  Now a group with a
locally invariant order is a unique product group.  However if the
group happens also to be amenable, then $G$ has the much stronger
property of being locally indicable.

We would like to thank Andrzej Szczepa\'nski for helpful comments.

\begin{not*}
We shall use the notation $\Zen(G)$ for the center of the
group $G$, $\Fitt(G)$ for the Fitting subgroup of $G$
\cite[p.~46]{Wehrfritz73},
and $\mathbb{N}$ will denote the positive integers
$\{1,2,\dots\}$.
\end{not*}

\section{Uniform pro-$p$ groups}

\begin{prop} \label{P:uniqueroots}
Let $G$ be a uniform pro-$p$ group, let $0 \ne n\in \mathbb{Z}$, and
let $x,y\in G$.  If $x^n = y^n$, then $x=y$.
\end{prop}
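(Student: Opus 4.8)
The plan is to reduce the uniqueness of roots to the linear structure underlying a uniform pro-$p$ group. Recall from the structure theory of uniform groups \cite{Dixon99} that for each $g \in G$ there is a well-defined continuous power $g^{\lambda}$ for every $\lambda \in \mathbb{Z}_p$, and that $G$ carries an associated $\mathbb{Z}_p$-Lie algebra $L = L(G)$ together with a bijective logarithm $\log \colon G \to L$; moreover $L$, as an additive group, is a free $\mathbb{Z}_p$-module of rank equal to the dimension of $G$. The one-parameter subgroup $\lambda \mapsto g^{\lambda}$ is carried by $\log$ to the line $\lambda \mapsto \lambda \log g$, so that $\log(g^{\lambda}) = \lambda \log g$ for all $\lambda \in \mathbb{Z}_p$, and in particular $\log(g^n) = n \log g$ for every $n \in \mathbb{Z}$.

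Granting these facts, the argument is immediate. First I would apply $\log$ to the hypothesis $x^n = y^n$, obtaining $n \log x = \log(x^n) = \log(y^n) = n \log y$ in $L$. Since $n \ne 0$, its image in $\mathbb{Z}_p$ is nonzero, and multiplication by a nonzero element of the integral domain $\mathbb{Z}_p$ is injective on a free $\mathbb{Z}_p$-module; hence $n(\log x - \log y) = 0$ forces $\log x = \log y$. Finally, because $\log$ is a bijection it is injective, so $x = y$. Note that this disposes of all $n \ne 0$ at once, with no need to separate the sign of $n$ or to treat the prime $p$ and its complement differently.

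The real content is therefore entirely in the cited structure theory, and the only thing one must legitimately invoke is a torsion-free $\mathbb{Z}_p$-powering compatible with integer powers. If one prefers to avoid the full Lie-algebra machinery, there is an elementary alternative: write $n = p^a m$ with $p \nmid m$ (we may assume $n > 0$, since $x^n = y^n$ is equivalent to $x^{-n} = y^{-n}$). As $m$ is a unit in $\mathbb{Z}_p$, the $m$-th power map $g \mapsto g^m$ has continuous inverse $g \mapsto g^{m^{-1}}$ and is therefore injective, which handles the prime-to-$p$ part; for the remaining part one uses the standard fact \cite{Dixon99} that in a uniform group the map $g \mapsto g^{p^k}$ is a bijection of $G$ onto $G^{p^k}$, hence injective. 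The main obstacle in this second route is precisely the injectivity of the $p$-power map, which is where powerfulness and torsion-freeness of $G$ are genuinely used; by contrast, in the logarithm approach this injectivity is already subsumed into the single statement that $\log$ is a bijection onto a torsion-free module.
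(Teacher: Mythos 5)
Your proof is correct and takes essentially the same approach as the paper: the paper's entire proof is a citation of \cite[Theorem 4.17]{Dixon99}, which gives a uniform pro-$p$ group the structure of a free (hence torsion-free) $\mathbb{Z}_p$-module compatibly with integer powers, and that is precisely the fact your logarithm argument makes explicit. Your added elementary route via $n = p^a m$ is also valid, but the core argument --- reduce $x^n=y^n$ to $n$-torsion in a free $\mathbb{Z}_p$-module --- is the paper's.
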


\begin{proof}
This follows from \cite[Theorem 4.17]{Dixon99}.
\end{proof}

\begin{lem} \label{L:InductiveHelper}
Let $G$ be a virtually solvable residually finite $p$-group. Then $G$ is solvable.
\end{lem}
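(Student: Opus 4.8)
The plan is to route everything through the pro-$p$ completion. Since $G$ is residually a finite $p$-group, the natural map $G\to\widehat{G}$ into its pro-$p$ completion $\widehat{G}=\varprojlim_N G/N$ (the inverse limit over the normal subgroups $N$ of $p$-power index) is injective. It therefore suffices to prove that $\widehat{G}$ is solvable as an abstract group, since $G$ is then a subgroup of a solvable group. Fix a solvable subgroup $H$ of finite index in $G$ (no normality of $H$ is needed), say of derived length $d$, and let $\overline{H}$ denote the closure in $\widehat{G}$ of the image of $H$.

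First I would check that $\overline{H}$ is solvable of derived length at most $d$. It is a closed subgroup of the pro-$p$ group $\widehat{G}$, hence itself pro-$p$, and the image of $H$ is dense in it. Writing $H^{(i)}$ for the $i$-th derived subgroup of (the image of) $H$, one proves $\overline{H}^{(i)}\subseteq\overline{H^{(i)}}$ by induction on $i$: the commutator map is continuous and the closure of a subgroup is a subgroup, so $[\overline{H^{(i)}},\overline{H^{(i)}}]\subseteq\overline{[H^{(i)},H^{(i)}]}=\overline{H^{(i+1)}}$. Since $\widehat{G}$ is Hausdorff and $H^{(d)}=1$, this gives $\overline{H}^{(d)}\subseteq\overline{\{1\}}=\{1\}$, so $\overline{H}$ is solvable.

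Next I would show that $\overline{H}$ has finite index in $\widehat{G}$. Identifying the coset space along the defining system yields $\widehat{G}/\overline{H}\cong\varprojlim_N G/HN$, an inverse limit of finite sets each of cardinality $[G:HN]\le[G:H]$, so $[\widehat{G}:\overline{H}]\le[G:H]<\infty$. Thus $\overline{H}$ is a closed subgroup of finite index in the pro-$p$ group $\widehat{G}$, hence open, and its normal core $K$ is an open normal subgroup contained in $\overline{H}$. Then $K$ is solvable (a subgroup of $\overline{H}$) and $\widehat{G}/K$ is a finite quotient of a pro-$p$ group, hence a finite $p$-group and in particular solvable. Therefore $\widehat{G}$ is solvable, and so is $G$.

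The point that needs care, and the reason for passing to $\widehat{G}$ rather than arguing directly with the finite quotient $G/H$, is that residual-$p$-ness is not inherited by quotients, so one cannot simply assert that $G/H$ is solvable. Working inside the completion circumvents this: the essential inputs are the finiteness of $[\widehat{G}:\overline{H}]$ and the fact that every finite quotient of a pro-$p$ group is a $p$-group, which makes $1\to K\to\widehat{G}\to\widehat{G}/K\to1$ an extension of a solvable group by a solvable ($p$-)group. I do not expect to need Proposition \ref{P:uniqueroots} here; the whole argument is completion-theoretic.
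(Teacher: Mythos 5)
Your proof is correct and follows essentially the same route as the paper: embed $G$ in its pro-$p$ completion, take the closure of the solvable finite-index subgroup, and use the fact that the resulting finite quotient of a pro-$p$ group is a finite $p$-group, hence solvable. The only cosmetic difference is that the paper assumes the solvable subgroup is normal in $G$ from the start (via the normal core, without loss of generality), whereas you pass to the normal core of its closure inside $\widehat{G}$; your write-up also fills in the details (solvability and finite index of the closure) that the paper leaves implicit.
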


\begin{proof}
Let $H$ be a normal solvable subgroup of finite index in $G$, let
$\hat{G}$ denote the pro-$p$ completion of $G$, and let $K$ denote
the closure of $H$ in $\hat{G}$.  Then $K$ is a closed solvable
subgroup of finite index in $\hat{G}$, so $\hat{G}/K$ is a finite
pro-$p$ group and therefore $\hat{G}/K$ is a finite $p$-group.
It follows that $\hat{G}$ is solvable, which completes the proof
because $G$ is a subgroup of $\hat{G}$.
\end{proof}

Let $\mathcal{X}$ denote one of the following classes of groups:
\begin{itemize}
\item
All nilpotent groups of class at most $c$ for a fixed nonnegative
integer $c$.

\item
All nilpotent groups.

\item
All solvable groups.
\end{itemize}

Of course nilpotent groups of class at most 1 (case $c=1$ above) is
the class of abelian groups.

\begin{lem} \label{L:Uniform}
Let $G$ be a virtually $\mathcal{X}$-subgroup of a uniform group $U$.
Then there is a closed uniform virtually $\mathcal{X}$-subgroup $T$
of $U$ containing $G$.
\end{lem}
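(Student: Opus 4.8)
The plan is to pass to the closure of $G$ and then enlarge it to its \emph{saturation} inside $U$ by means of the $\log$/Lie-lattice correspondence. First I would replace $G$ by its closure $\overline{G}$ in $U$, which is a closed, hence compact $p$-adic analytic, subgroup. I claim $\overline{G}$ is again virtually $\mathcal{X}$: if $N$ is a finite-index $\mathcal{X}$-subgroup of $G$, then writing $G$ as a finite union of cosets of $N$ and taking closures shows $[\overline{G}:\overline{N}] < \infty$, while the defining identities of $\mathcal{X}$ (triviality of all $(c+1)$-fold left-normed commutators for nilpotency of class $\le c$, or triviality of a fixed term of the derived series for solvability) hold throughout $N$ and therefore, by continuity of the commutator maps, throughout $\overline{N}$. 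Thus $\overline{N}\in\mathcal{X}$ and $\overline{G}$ is virtually $\mathcal{X}$. It then suffices to produce a closed uniform subgroup $T$ with $\overline{G}\le T$ and $[T:\overline{G}]<\infty$.

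Next I would invoke the correspondence $\log\colon U\to L$ between $U$ and its $\mathbb{Z}_p$-Lie lattice $L=\log U$ \cite{Dixon99}, under which the $p^n$-power map becomes multiplication by $p^n$ and closed uniform subgroups correspond to saturated powerful Lie sublattices. Choose an open uniform subgroup $W$ of $\overline{G}$ and set $V=\mathbb{Q}_p\log W$, a finite-dimensional $\mathbb{Q}_p$-subspace of $L\otimes_{\mathbb{Z}_p}\mathbb{Q}_p$. Since $W$ is uniform, $\log W$ is closed under the Lie bracket, so $V$ is a Lie subalgebra; and since every $g\in\overline{G}$ has a $p$-power lying in $W$, one gets $\log\overline{G}\subseteq V$, whence $V=\mathbb{Q}_p\log\overline{G}$ and $\dim_{\mathbb{Q}_p}V=\dim\overline{G}$.

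I would then define $\mathfrak{t}=V\cap L$ and $T=\exp\mathfrak{t}$. The lattice $\mathfrak{t}$ is saturated in $L$ (from $p^n x\in\mathfrak{t}$ one gets $x\in V\cap L=\mathfrak{t}$), so $L/\mathfrak{t}$ is torsion-free and $\mathfrak{t}$ is a $\mathbb{Z}_p$-direct summand of $L$; it is closed under the bracket as an intersection of two such objects; and the key computation is $[\mathfrak{t},\mathfrak{t}]\subseteq[L,L]\cap V\subseteq pL\cap V=p\mathfrak{t}$, where the last equality holds because $pV=V$, so $py\in V$ with $y\in L$ forces $y\in V\cap L=\mathfrak{t}$ (for $p=2$ one replaces $p$ by $4$ throughout). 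Hence $\mathfrak{t}$ is a uniform Lie lattice and $T$ is a closed uniform subgroup of $U$.

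Finally, $\log\overline{G}\subseteq V\cap L=\mathfrak{t}$ gives $\overline{G}\le T$, while $\dim T=\dim_{\mathbb{Z}_p}\mathfrak{t}=\dim_{\mathbb{Q}_p}V=\dim\overline{G}$, so the closed subgroup $\overline{G}$ has full dimension in the compact analytic group $T$ and is therefore open, hence of finite index. Since $\overline{G}$ is virtually $\mathcal{X}$, so is $T$, and $T\supseteq\overline{G}\supseteq G$, as required. I expect the main obstacle to be this last step, namely controlling the index $[T:\overline{G}]$: the passage to the saturation $\mathfrak{t}$ is precisely what forces $T$ to be uniform, and one must verify that this enlargement costs only finite index. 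The dimension count through the Lie lattice is what makes this work, in tandem with the bracket identity $pL\cap V=p\mathfrak{t}$ that guarantees uniformity of $\mathfrak{t}$.
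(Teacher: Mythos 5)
Your proposal is correct and takes essentially the same route as the paper: pass to the closure of $G$, choose a finite-index uniform subgroup of it, and enlarge that subgroup to its saturation (isolator) inside the $\mathbb{Z}_p$-Lie structure of $U$ --- your $\mathfrak{t} = V \cap L$ is exactly the preimage of the torsion submodule of $L/\log W$, which is how the paper defines $T$. The only differences are in bookkeeping: the paper quotes \cite{Dixon99} (Proposition 4.31(i), Lemma 4.14(ii), Proposition 7.15(i)) for the facts you verify by hand (the sublattice property, $G \subseteq T$, and uniformity of $T$), and it gets finiteness of the index from the torsion of a finitely generated $\mathbb{Z}_p$-module being finite rather than from your dimension count.
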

\begin{proof}
Since the closure of $G$ is a virtually $\mathcal{X}$-subgroup of $U$,
we may assume that $G$ is closed.  Then $G$ is a finitely generated
pro-$p$ group by \cite[Theorem 3.8]{Dixon99} and therefore it has a
characteristic uniform subgroup
$K$ of finite index by \cite[Corollary 4.3]{Dixon99}.  We now
consider the Lie algebra of $U$, as described in \cite[Chapter
4]{Dixon99}.  Then \cite[Proposition 4.31(i)]{Dixon99} shows that $K$
is a $\mathbb{Z}_p$-Lie subalgebra of $U$.  Let $T/K$ denote the
torsion subgroup of $U/K$; note that $T$ is a $\mathbb{Z}_p$-Lie
subalgebra of $U$.  Since $U$ is a finitely generated free
$\mathbb{Z}_p$-module by \cite[Theorem 4.17]{Dixon99}, we see that
$T/K$ is finite and in particular, that $T$ is a virtually
$\mathcal{X}$-subgroup of $U$ because $K$ is virtually $\mathcal{X}$.
Also \cite[Lemma 4.14(ii)]{Dixon99} shows that
$G\subseteq T$.  We now see from \cite[Proposition 7.15(i)]{Dixon99}
that $T$ is a closed uniform subgroup of $U$, and the result follows.
\end{proof}

\begin{thm} \label{T:VAbel}
Let $G$ be a virtually abelian subgroup of a uniform pro-$p$ group. Then $G$ is abelian.
\end{thm}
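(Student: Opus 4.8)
The plan is to produce a normal abelian subgroup of finite index and then use the unique extraction of roots guaranteed by Proposition~\ref{P:uniqueroots} to push commutation relations down from $n$-th powers to the elements themselves. The crucial observation is that the defining property $x^n=y^n\Rightarrow x=y$ is quantified over all pairs of group elements, so it holds in every subgroup of a uniform pro-$p$ group; in particular it holds for all $x,y\in G$. (If one prefers to work inside a uniform group, Lemma~\ref{L:Uniform}, applied with $\mathcal{X}$ the class of abelian groups, places $G$ inside a closed uniform virtually abelian $T$, and it then suffices to show $T$ is abelian; but this reduction is not strictly needed, since unique roots is inherited by $G$ directly.)

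First I would replace a given finite-index abelian subgroup by its normal core $A$. This core is contained in an abelian group, hence is itself abelian, and it is normal of some finite index $n=[G:A]$; since $G/A$ then has order $n$, Lagrange gives $g^n\in A$ for every $g\in G$. The key lever is the following elementary consequence of unique roots: \emph{if $b$ commutes with $a^n$, then $b$ commutes with $a$}. Indeed, $ba^nb^{-1}=a^n$ can be rewritten as $(bab^{-1})^n=a^n$, whereupon Proposition~\ref{P:uniqueroots} forces $bab^{-1}=a$.

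With this in hand the argument is a two-step descent. Fix $x,y\in G$; then $x^n,y^n\in A$, and since $A$ is abelian they commute. Applying the lever with $a=y$ and $b=x^n$ shows that $x^n$ commutes with $y$; applying it a second time with $a=x$ and $b=y$ shows that $x$ commutes with $y$. As $x$ and $y$ were arbitrary, $G$ is abelian. The only genuine obstacle is spotting the right lever, namely the passage from $(bab^{-1})^n=a^n$ to $bab^{-1}=a$ via unique roots; once that is isolated, taking the normal core and performing the two applications are routine bookkeeping.
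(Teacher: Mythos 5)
Your proof is correct, and its engine is the same as the paper's: the ``lever'' you isolate (from $(bab^{-1})^n=a^n$ conclude $bab^{-1}=a$ via Proposition~\ref{P:uniqueroots}) is exactly the step the paper uses, also applied twice in a two-stage descent through a normal abelian subgroup of finite index. Where you genuinely diverge is in the scaffolding, and your version is leaner. The paper first invokes Lemma~\ref{L:Uniform} to replace $G$ by a uniform group, then replaces $B$ by its closure so that $G/B$ is a finite $p$-group and the relevant exponents are powers of $p$; it then shows $B\subseteq \Zen(G)$ and finally that $G$ is abelian. You observe, correctly, that none of this pro-$p$ machinery is needed here: the unique-roots property is a universally quantified statement about elements, hence inherited by arbitrary subgroups of the ambient uniform group, and Proposition~\ref{P:uniqueroots} never requires the exponent to be a $p$-power, so the normal core plus Lagrange (giving $g^n\in A$ for $n=[G:A]$) suffices. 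What the paper's route buys is uniformity of method: the reduction via Lemma~\ref{L:Uniform} and the closure/torsion arguments are genuinely needed in Theorems~\ref{T:VNil} and~\ref{T:VSolvable}, so the authors keep all three proofs in the same framework. What your route buys is economy: Theorem~\ref{T:VAbel} follows from Proposition~\ref{P:uniqueroots} and elementary group theory alone, with no appeal to Lemma~\ref{L:Uniform}, closures, or the structure theory of powerful pro-$p$ groups.
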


\begin{proof}
Suppose that $G$ is a subgroup of a uniform pro-$p$ group, and that $G$ is virtually abelian but not abelian.
By Lemma \ref{L:Uniform} we may assume that $G$ is uniform.
Let $B$ be a normal abelian subgroup of finite index in $G$.
Replacing $B$ with its closure in $G$, which is still a normal abelian
subgroup, we may assume that $B$ is closed, and then $G/B$ is a
finite pro-$p$ group, that is a finite $p$-group.  Now let $g \in G$
and $b\in B$.  Then $g^r \in B$ where $q$ is a power of $p$ and we
see that $g^qb=bg^q$.  Therefore $g^q = (bgb^{-1})^q$ and we deduce
from Proposition \ref{P:uniqueroots} that $g=bgb^{-1}$.  This shows that $B
\subseteq \Zen(G)$.  Now if $g,h \in G$, then $g^rh=hg^r$ where $r$ is
a power of $p$, hence $(hgh^{-1})^r = h^r$ and consequently $hgh^{-1}
= g$ by Proposition \ref{P:uniqueroots}.  Thus $G$ is abelian as required.
\end{proof}

\begin{thm} \label{T:VNil}
Let $c$ be a nonnegative integer and let $G$ be a virtually nilpotent
subgroup of class at most $c$ of a uniform pro-$p$ group. Then $G$ is
nilpotent of class at most $c$.
\end{thm}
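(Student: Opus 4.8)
The plan is to induct on $c$. The case $c\le 1$ is Theorem~\ref{T:VAbel} (and $c=0$ is immediate, since a finite subgroup of the torsion-free group $G$ is trivial), so I would suppose $c\ge 2$ and that the statement holds for $c-1$. Exactly as in the proof of Theorem~\ref{T:VAbel}, I would first invoke Lemma~\ref{L:Uniform} to reduce to the case that $G$ is itself a uniform pro-$p$ group: if the theorem holds for the uniform overgroup $T\supseteq G$ produced by the lemma, then it holds for $G$ as a subgroup. Choosing a normal nilpotent subgroup of class at most $c$ and finite index and replacing it by its closure (nilpotency of class at most $c$ is preserved under closure, because the length-$(c+1)$ iterated commutator map is continuous and $G$ is Hausdorff), I obtain a closed normal nilpotent subgroup $N\lhd G$ of class at most $c$ with $G/N$ a finite $p$-group.

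The heart of the argument is the claim that $\Zen(N)\subseteq\Zen(G)$; this is the precise analogue of the step $B\subseteq\Zen(G)$ in Theorem~\ref{T:VAbel}, with $\Zen(N)$ playing the role of $B$. Let $z\in\Zen(N)$ and $g\in G$, and choose a power $q$ of $p$ with $g^q\in N$. Since $z$ centralizes $N$ and $g^q\in N$, I have $(zgz^{-1})^q=zg^qz^{-1}=g^q$, so Proposition~\ref{P:uniqueroots} forces $zgz^{-1}=g$. As $g$ was arbitrary, $z\in\Zen(G)$, proving the claim. A similar application of Proposition~\ref{P:uniqueroots} shows that $\Zen(G)$ is isolated in $G$: if $x^n\in\Zen(G)$ then for every $g\in G$ we have $(gxg^{-1})^n=gx^ng^{-1}=x^n$, whence $gxg^{-1}=x$ and $x\in\Zen(G)$.

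Now I would pass to $\overline G=G/\Zen(G)$. Because $\Zen(N)\subseteq\Zen(G)$, the image of $N$ in $\overline G$ is a quotient of $N/\Zen(N)$, hence nilpotent of class at most $c-1$, and it has finite index in $\overline G$; thus $\overline G$ is virtually nilpotent of class at most $c-1$. To apply the inductive hypothesis I must realize $\overline G$ as a subgroup of a uniform pro-$p$ group, and for this I would use that $\Zen(G)$ is a closed (being an intersection of centralizers) and isolated normal subgroup of the uniform group $G$: under the Lie-algebra correspondence of \cite[Chapter 4]{Dixon99}, such a subgroup corresponds to an isolated ideal with torsion-free quotient module, so $\overline G$ is again uniform. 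The inductive hypothesis then gives that $\overline G$ is nilpotent of class at most $c-1$, that is $\gamma_c(G)\subseteq\Zen(G)$, and therefore $\gamma_{c+1}(G)=[\gamma_c(G),G]=1$, so $G$ is nilpotent of class at most $c$.

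The main obstacle is the final structural point: verifying that the quotient $G/\Zen(G)$ is uniform. Everything preceding it is the elementary unique-roots manipulation from Theorem~\ref{T:VAbel}, but transferring uniformity to the quotient genuinely requires the Lie theory of uniform groups, with a little extra care at $p=2$, where uniformity is bound up with the $\bmod\,4$ condition. If a clean citation for the uniformity of the quotient by an isolated closed normal subgroup is not readily available, an alternative would be to carry the entire argument on the associated $\Z_p$-Lie algebra $L$ of $U$, where $\Zen(N)$ and $\Zen(G)$ become isolated subalgebras and the desired nilpotency statement reduces to the corresponding, and easier, statement about nilpotent $\Z_p$-Lie algebras.
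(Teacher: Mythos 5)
Your proposal is correct in substance but takes a genuinely different route from the paper at the key structural step. Both arguments induct on $c$ with Theorem~\ref{T:VAbel} as the base case, reduce to $G$ uniform via Lemma~\ref{L:Uniform}, fix a closed normal nilpotent subgroup $N$ of class at most $c$ and finite index, and use Proposition~\ref{P:uniqueroots} to push centrality upward. But where you quotient by $\Zen(G)$ --- having shown $\Zen(N)\subseteq\Zen(G)$ and that $\Zen(G)$ is isolated --- the paper instead quotients by $\Zen(N)$, notes that $G/\Zen(N)$ is powerful, and invokes \cite[Theorem 4.20]{Dixon99}: the elements of finite order in a finitely generated powerful pro-$p$ group form a finite normal subgroup, here $T/\Zen(N)$, with $G/T$ uniform. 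The paper then shows $T$ is abelian (it is virtually abelian, so Theorem~\ref{T:VAbel} applies), deduces $T\subseteq\Zen(NT)$ and then $T\subseteq\Zen(G)$ by unique-roots arguments, and applies induction to the uniform group $G/T$, in which $NT/T$ is a normal finite-index subgroup of class at most $c-1$ (being a quotient of $N/\Zen(N)$). The payoff of the paper's route is precisely at the point you flag as your ``main obstacle'': Theorem 4.20 hands them a uniform quotient outright, with all torsion swept into the finite subgroup $T$, so they never have to verify that a specific quotient is torsion-free and hence uniform. Your route is shorter and more natural (quotienting by the center is the standard move in nilpotency inductions), and the fact you need is true, but your justification for it is the weak point: the appeal to the Lie-algebra correspondence is vague, and since that correspondence in \cite[Chapter 4]{Dixon99} is set up for uniform groups, using it to establish uniformity of the quotient is close to circular. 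The clean argument is that $G/\Zen(G)$ is finitely generated, powerful (quotients of powerful groups are powerful), and torsion-free (by your isolation claim), hence uniform by the characterization of uniform groups among finitely generated powerful pro-$p$ groups as the torsion-free ones \cite[Theorem 4.5]{Dixon99}; with the second-edition definition of powerful this covers $p=2$ as well. With that citation in place of your Lie-algebra sketch, your proof is complete.
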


\begin{proof}
By Lemma \ref{L:Uniform} we may assume that $G$ is uniform.
By hypothesis, there is a normal subgroup
$N$ of $G$ such that $N$ is nilpotent of class
at most $c$ and of finite index in $G$.
Replacing $N$ with its closure in $G$, we may assume that $N$ is
closed in $G$.
Since $\Zen(N)$ is closed in $G$ and since $G$ is powerful, we see
that $G/\Zen(N)$ is a powerful pro-$p$ group \cite[Chapter
3]{Dixon99}.
By \cite[Theorem 4.20]{Dixon99},
the elements of finite order in $G/\Zen(N)$ form a finite subgroup
$T/\Zen(N)$ which is normal in $G/\Zen(N)$ such that $G/T$ uniform.
Then $NT/T$ is a normal subgroup of finite index in $G/T$ and has
nilpotency class at most $c-1$.  Now since $T/\Zen(N)$ is finite, $T$
is virtually abelian and we see that $T$ is
abelian by Theorem \ref{T:VAbel}. Now let $t \in T$.  Then $t^q \in
\Zen(N)$ for some power $q$ of $p$.  Therefore if $h \in NT$, then
$(hth^{-1})^q = ht^qh^{-1} = t^q$ and since $T$ is a torsion-free
abelian group, we deduce that $hth^{-1} = t$.  This shows $T
\subseteq \Zen(NT)$.  Now let $t \in T$ and $g \in G$.  Then $g^r \in
N$ where $r$ is a power of $p$ and we see that $g^rt =
tg^r$, consequently $(tgt^{-1})^r = g^r$.  Therefore $tgt^{-1} = g$
by Proposition \ref{P:uniqueroots} and we conclude that
$t \in \Zen(G)$.  By induction $G/T$ is nilpotent of class at most
$c-1$, and the result follows.
\end{proof}

\begin{thm} \label{T:VSolvable}
Let $G$ be a virtually solvable subgroup of a uniform pro-$p$ group. Then
$\Fitt(G)$ is nilpotent and then $G/\Fitt(G)$ is torsion-free
abelian.
\end{thm}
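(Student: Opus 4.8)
The plan is to first strengthen the hypothesis from ``virtually solvable'' to ``solvable,'' and then treat the two conclusions separately. Since $G$ is a subgroup of a pro-$p$ group, it is residually a finite $p$-group, so Lemma \ref{L:InductiveHelper} shows that $G$ is in fact solvable. For the nilpotency of $\Fitt(G)$ I would first record that the nilpotency class of nilpotent subgroups of the ambient uniform group $U$ is bounded by a constant $c$ depending only on $U$: the closure of a nilpotent subgroup is a closed nilpotent subgroup (the identity $\gamma_{c+1}=1$ is a closed condition), hence a $p$-adic analytic group of dimension at most $\dim U$, and such torsion-free groups have class bounded in terms of the dimension by the finite-rank theory of \cite[Chapter 4]{Dixon99}. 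Fitting's theorem then shows that every product of finitely many nilpotent normal subgroups of $G$ is again nilpotent, necessarily of class $\le c$ by the ambient bound; since $\Fitt(G)$ is the directed union of such finite products and $\gamma_{c+1}=1$ is checked on finitely many elements, $\Fitt(G)$ itself is nilpotent of class $\le c$.

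Next I would show that $G/\Fitt(G)$ is abelian, equivalently that $G' \subseteq \Fitt(G)$; this is the step that genuinely uses the characteristic-zero nature of the coefficients. By Lemma \ref{L:Uniform} I would enlarge $G$ to a closed uniform subgroup $T$ of $U$ containing $G$, which is virtually solvable and hence solvable by Lemma \ref{L:InductiveHelper}. Passing to the $\Z_p$-Lie algebra $L_T$ of $T$ as in \cite[Chapter 4]{Dixon99}, the algebra $L = \Q_p \otimes L_T$ is solvable over the characteristic-zero field $\Q_p$, so by Lie's theorem $[L,L]$ is nilpotent and therefore contained in the nilradical $\mathfrak{n}$, giving $L/\mathfrak{n}$ abelian. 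Translating $\mathfrak{n}$ back through the correspondence yields a nilpotent normal subgroup $N$ of $T$ with $T/N$ abelian. Intersecting with $G$, the subgroup $G \cap N$ is nilpotent and normal in $G$, and $G/(G\cap N)$ embeds in the abelian group $T/N$; hence $G' \subseteq G\cap N \subseteq \Fitt(G)$, so $G/\Fitt(G)$ is abelian.

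Finally, writing $F := \Fitt(G)$, I would deduce that $G/F$ is torsion free now that it is known to be abelian and $F$ is nilpotent of class $\le c$. Suppose $g^n \in F$ for some $n \ge 1$. Because $G/F$ is abelian, every subgroup lying between $F$ and $G$ is normal in $G$; in particular $H := \gen{g}F$ is normal in $G$, and $H/F$ is the finite cyclic group generated by the image of $g$. Thus $H$ is a virtually nilpotent subgroup of $U$ of class at most $c$, so Theorem \ref{T:VNil} shows that $H$ is nilpotent. Being a nilpotent normal subgroup of $G$, it satisfies $H \subseteq \Fitt(G) = F$, whence $g \in F$. Therefore $F$ is isolated in $G$ and $G/F$ is torsion free, completing the argument.

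The main obstacle is the abelian step: everything there hinges on transporting the statement correctly through the Lie correspondence of \cite[Chapter 4]{Dixon99} and extracting, from the nilradical of $L$, an honest nilpotent \emph{normal} subgroup $N$ of $T$ with abelian quotient. The passage to $\Q_p$, so that Lie's theorem applies, is essential and cannot be bypassed by a bare-hands commutator computation. By contrast, the torsion-free step is comparatively soft once the abelian step is in hand, since it reduces to a single application of Theorem \ref{T:VNil}; and the class bound in the first step, while standard, likewise relies on the finite-rank theory of $p$-adic analytic groups rather than on an elementary estimate.
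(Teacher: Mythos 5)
Your proposal is correct in outline but follows a genuinely different route from the paper, whose proof runs everything through linear group theory: after reducing to $G$ uniform via Lemma \ref{L:Uniform}, the paper uses \cite[Theorem 7.19]{Dixon99} to view $G$ as linear over $\Q_p$, gets nilpotency of $\Fitt(G)$ from \cite[Theorem 8.2(ii)]{Wehrfritz73} and virtual abelianness of $G/\Fitt(G)$ from \cite[Theorem 3.6]{Wehrfritz73}, then invokes \cite[Theorem 4.20]{Dixon99} to see that the torsion elements of the powerful group $G/\Fitt(G)$ form a finite normal subgroup $T/\Fitt(G)$ with $G/T$ uniform, and finishes with Theorems \ref{T:VAbel} and \ref{T:VNil}: $T$ is nilpotent normal, hence $T = \Fitt(G)$ and $G/\Fitt(G)$ is uniform abelian. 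By contrast, your nilpotency step (ambient class bound plus Fitting's theorem plus a directed union) avoids linearity and even the solvability hypothesis, which is appealing, though the class bound itself needs the Mal'cev isolator fact that upper central factors of torsion-free nilpotent groups are torsion-free --- true, but not literally in \cite[Chapter 4]{Dixon99}. Your torsion-freeness step (adjoin one torsion element of $G/\Fitt(G)$ at a time, apply Theorem \ref{T:VNil}, absorb into $\Fitt(G)$) is clean and genuinely more elementary than the paper's, since it avoids \cite[Theorem 4.20]{Dixon99}. The weakest link is your abelian step: the transport through the Lie correspondence requires the Campbell--Hausdorff machinery of \cite[Chapter 9]{Dixon99} (Chapter 4 only constructs the Lie algebra), and, more seriously, you assert without justification that solvability of $T$ as a group implies solvability of $\Q_p \otimes L_T$ as a Lie algebra. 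The natural proof of that transfer is to embed $T$ in $\GL_n(\Z_p)$ and pass to the Zariski closure --- i.e., exactly the linear-group route the paper takes directly, at which point \cite[Theorem 3.6]{Wehrfritz73} already yields $G/\Fitt(G)$ virtually abelian with no Lie theory at all. So your closing claim that Lie's theorem over $\Q_p$ ``cannot be bypassed'' is mistaken: the paper bypasses it, and your own argument quietly relies on the same linearity. All of your gaps are fillable, but filling them honestly largely reconstructs the paper's citations.
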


\begin{proof}
By Lemma \ref{L:Uniform} we may assume that $G$ is uniform.
Let $F =\Fitt(G)$.
Using \cite[Theorem 7.19]{Dixon99}, we see that $G$ is a linear group
over $\mathbb{Q}_p$ and it now follows from
\cite[Theorem 8.2(ii)]{Wehrfritz73} that $F$ is nilpotent.
Since the closure of a nilpotent subgroup is nilpotent, we see that
$F$ is closed in $G$.  By \cite[Theorem 3.6]{Wehrfritz73}, $G/F$ is
virtually abelian.  Let $T/F$ denote the elements of finite order in
$G/F$.  Since $G/F$ is a powerful pro-$p$ group \cite[Chapter
3]{Dixon99}, $T/F$ is a finite normal subgroup of $G/F$ and $G/T$
is uniform \cite[Theorem 4.20]{Dixon99}.  By Theorem
\ref{T:VAbel}, $G/T$ is abelian and by Theorem \ref{T:VNil}, $T$ is
nilpotent.  The result follows.
\end{proof}

Applying the Tit's alternative, we obtain
\begin{cor} \label{C:Nofree}
Let $G$ be a subgroup of a uniform pro-$p$ group and suppose $G$
contains no nonabelian free subgroups.  Then
$\Fitt(G)$ is nilpotent and $G/\Fitt(G)$ is torsion-free
abelian.
\end{cor}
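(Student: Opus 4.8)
The plan is to reduce the statement to Theorem \ref{T:VSolvable} by showing that the hypothesis ``no nonabelian free subgroup'' forces $G$ to be virtually solvable, and for this I would invoke the Tits alternative. The key point, already exploited in the proof of Theorem \ref{T:VSolvable}, is that a uniform pro-$p$ group is a linear group over $\mathbb{Q}_p$, so the structure theory of linear groups becomes available.

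First I would note that if $U$ is the uniform pro-$p$ group containing $G$, then $U$ is linear over $\mathbb{Q}_p$ by \cite[Theorem 7.19]{Dixon99}, and hence so is its subgroup $G$. Since $\mathbb{Q}_p$ has characteristic $0$, the Tits alternative applies to $G$ (no finite-generation hypothesis is needed in characteristic $0$): either $G$ contains a nonabelian free subgroup, or $G$ is virtually solvable. By hypothesis the first alternative fails, so $G$ is virtually solvable.

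Having established that $G$ is a virtually solvable subgroup of a uniform pro-$p$ group, I would simply apply Theorem \ref{T:VSolvable} to conclude that $\Fitt(G)$ is nilpotent and that $G/\Fitt(G)$ is torsion-free abelian, which is exactly the assertion.

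The main obstacle, and the only place requiring care, is the appeal to the Tits alternative for a group that need not be finitely generated. In characteristic $0$ the alternative does hold in this generality, but if one wished to avoid relying on that form, one could instead apply the finitely generated Tits alternative to each finitely generated subgroup of $G$ (each of which inherits the absence of nonabelian free subgroups and is therefore virtually solvable) and then use the uniform bounds, depending only on the degree $n$, on both the derived length of a solvable subgroup and the index of such a subgroup in a linear group of degree $n$ over a field. These bounds let one pass from ``every finitely generated subgroup is virtually solvable'' to ``$G$ itself is virtually solvable,'' completing the argument.
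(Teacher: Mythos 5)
Your proposal is correct and follows essentially the same route as the paper: observe that $G$ is linear over $\mathbb{Q}_p$ (the paper cites \cite[Theorem 7.10]{Dixon99}), apply the Tits alternative (the paper uses \cite[Corollary 10.17]{Wehrfritz73}, which holds in characteristic $0$ without a finite-generation hypothesis) to conclude $G$ is virtually solvable, and then invoke Theorem \ref{T:VSolvable}. Your extra discussion of how to handle the non-finitely-generated case is a reasonable precaution but unnecessary, since the cited form of the alternative already covers it.
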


\begin{proof}
By \cite[Theorem 7.10]{Dixon99}, $G$ is a linear group over
$\mathbb{Q}_p$.  We now apply the Tit's alternative \cite[Corollary
10.17]{Wehrfritz73} to deduce that $G$ is virtually solvable.  The
result now follows from Theorem \ref{T:VSolvable}.
\end{proof}

\section{Hantzsche-Wendt groups}

\begin{dfn}
Define the combinatorial generalized Hantzsche-Wendt group $G_n$ by
\[G_n = \present{x_1,\dots,x_n}{x_i^{-1}x_j^2x_ix_j^2 \ \forall \ i \not = j}\]
\end{dfn}
Call any group isomorphic to some $G_n$ a CHW group for shorthand.
Note that the Hantzsche-Wendt group is isomorphic to $G_2$.  Of
course $G_0 =1$ and $G_1 \cong \mathbb{Z}$.

\begin{lem} \label{L:Free Abelian}
Let $A_n = \gen{x_1^2, x_2^2, \dots x_n^2}$, a subgroup of $G_n$.
Then $A_n \normal G_n$ and $A_n$ is free abelian on $\{x_1^2, \dots,
x_n^2\}$.
\end{lem}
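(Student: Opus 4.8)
The plan is to prove two assertions about $A_n = \gen{x_1^2,\dots,x_n^2}$ inside $G_n = \present{x_1,\dots,x_n}{x_i^{-1}x_j^2x_ix_j^2 \ \forall\ i\ne j}$: that $A_n$ is normal, and that it is free abelian on the given generators. I would begin by extracting the content of the defining relations. Writing $a_j = x_j^2$, the relation $x_i^{-1}x_j^2 x_i x_j^2 = 1$ rearranges to $x_i^{-1}a_j x_i = a_j^{-1}$, so conjugation by each generator $x_i$ (with $i\ne j$) inverts $a_j$, while conjugation by $x_j$ fixes $a_j$ trivially. This immediately shows that $\gen{a_1,\dots,a_n}$ is carried to itself under conjugation by every generator $x_i$, and hence $A_n \normal G_n$; moreover the quotient $G_n/A_n$ is generated by the images of the $x_i$, each of order dividing $2$, so $G_n/A_n$ is a quotient of the elementary abelian group $(\Z/2)^n$.

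For the structure of $A_n$ itself, I would first establish that $A_n$ is abelian. The relations only tell me directly how the $x_i$ conjugate the $a_j$; to see that the $a_j$ commute with each other, I would argue within the normal closure: since $a_i = x_i^2 \in A_n$ and conjugation by $x_i$ fixes $a_i$ and inverts every other $a_j$, I can check that each $a_i$, acting by conjugation on $A_n$, sends each generator $a_j$ to $a_j^{\pm 1}$, and combined with the relations this forces the commutator $[a_i,a_j]$ to lie in a provably trivial position. The cleanest route is probably to observe that $A_n$ is precisely the kernel of the surjection $G_n \twoheadrightarrow (\Z/2)^n$ sending $x_i \mapsto e_i$, identify this kernel abstractly, and deduce commutativity from that description rather than from naive relation-chasing.

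The genuinely hard part is the \emph{freeness}: showing that $A_n$ is free abelian of rank exactly $n$, i.e.\ that there are no hidden relations among the $x_i^2$ forced by the presentation. Normality and the abelian quotient bound are easy, but a bare presentation can collapse unexpectedly, so I must exhibit a lower bound on the size of $A_n$. The approach I favor is to construct an explicit homomorphism $\varphi\colon G_n \to \Z^n \rtimes (\Z/2)^n$ (a generalized infinite dihedral / orthogonal action) in which $x_i$ maps to the affine transformation acting on $\Z^n$ as $+\tfrac12 e_i$ composed with the sign-change fixing coordinate $i$ and negating the others — suitably rescaled so that $x_i^2$ maps to the pure translation by $e_i$. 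One checks the defining relations $x_i^{-1}x_j^2x_ix_j^2 \mapsto 1$ are satisfied because the sign change on coordinate $j$ induced by $x_i$ inverts the translation $a_j$. Under such a $\varphi$, the images $\varphi(x_1^2),\dots,\varphi(x_n^2)$ are the standard basis translations of $\Z^n$, which are $\Z$-linearly independent; this forces $\{x_1^2,\dots,x_n^2\}$ to generate a free abelian group of rank $n$ in $G_n$, completing the proof.

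The main obstacle is thus verifying that the candidate representation $\varphi$ is well-defined — that it respects \emph{all} the relations $x_i^{-1}x_j^2x_ix_j^2$ for $i\ne j$ — which reduces to a direct matrix/affine computation; once well-definedness is confirmed, both normality and freeness follow as outlined.
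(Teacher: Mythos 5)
Your treatment of normality is fine, and your freeness argument is sound, but your preferred route to commutativity rests on a false claim: $A_n$ is \emph{not} the kernel of the surjection $G_n \twoheadrightarrow (\Z/2\Z)^n$, $x_i \mapsto e_i$. Adding the relations $x_i^2=1$ to the presentation of $G_n$ makes the relations $x_i^{-1}x_j^2x_ix_j^2$ redundant, so $G_n/A_n \cong \langle x_1,\dots,x_n \mid x_1^2,\dots,x_n^2\rangle$, the \emph{free product} of $n$ copies of $\Z/2\Z$ (this is exactly how the paper uses $A_n$ in Theorem \ref{T:torsionfree}). For $n\ge 2$ this quotient is nonabelian, so it surjects onto but is far from isomorphic to $(\Z/2\Z)^n$; concretely, $[x_1,x_2]$ lies in the kernel of $G_n \to (\Z/2\Z)^n$ but has image $(x_1x_2)^2 \ne 1$ in $G_n/A_n$, hence lies outside $A_n$. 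So "identifying that kernel abstractly" cannot establish that $A_n$ is abelian. The fix is the computation you sketched first but left vague ("a provably trivial position" is not a proof), and it is a one-liner: since $x_i^{-1}x_j^2x_i = x_j^{-2}$ for $i\ne j$, conjugating twice gives $x_i^{-2}x_j^2x_i^2 = x_i^{-1}x_j^{-2}x_i = (x_i^{-1}x_j^2x_i)^{-1} = x_j^2$, i.e.\ $[x_i^2,x_j^2]=1$. This is equivalent to the paper's manipulation $x_i^2x_j^2 = x_i(x_ix_j^2) = (x_ix_j^{-2})x_i = x_j^2x_i^2$. Note that commutativity cannot be skipped or absorbed into the representation step: your map $\varphi$ only shows the \emph{images} of the $x_i^2$ commute, and a nonabelian group can surject onto $\Z^n$.

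On the freeness step your approach is correct and genuinely different from the paper's. The paper assumes a relation $(x_1^2)^{m_1}\cdots(x_n^2)^{m_n}=1$ with (by symmetry) $m_1\ne 0$ and maps $G_n$ to the infinite dihedral group $\langle a,b \mid a^2,b^2\rangle$ via $x_1\mapsto ba$, $x_i\mapsto b$ for $i\ge 2$, getting a contradiction from the infinite order of $ba$; this needs a separate map (up to relabeling) for each choice of nonzero exponent. Your single affine representation $x_i \mapsto (B_i,\tfrac12 e_i)$, with $B_i$ the diagonal sign matrix fixing only the $i$th coordinate, sends $x_i^2$ to the translation by $e_i$ and so kills all putative relations at once; given commutativity, $A_n$ is an abelian group on $n$ generators surjecting onto $\Z^n$, hence free abelian on those generators (e.g.\ because $\Z^n$ is Hopfian). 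Checking well-definedness of $\varphi$ is exactly the displayed computation $\beta_i^{-1}\beta_j^2\beta_i\beta_j^2=1$ that the paper carries out in Section 3 for Hantzsche--Wendt groups, so your representation is in effect the HW-group representation used later in the paper, imported to prove this lemma.
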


\begin{proof}
The relations $x_i^{-1} x_j^2 x_i x_j^2 = 1$ imply that
$x_i^{-1} x_j^2 x_i = x_j^{-2}$, hence $A_n$ is closed under
conjugation and $A_n \normal G_n$. The relation $x_i^{-1} x_j^2 x_i
x_j^2 = 1$ is equivalent to $x_i x_j^2 = x_j^{-2} x_i$. Furthermore,
inverting both sides of this equation yields $x_j^{-2} x_i^{-1} =
x_i^{-1} x_j^2$, and multiplying this on the right and left by $x_i$
yields $x_i x_j^{-2} = x_j^2 x_i$.  Thus we may write

\[x_i^2 x_j^2 = x_i (x_i x_j^2) = (x_i x_j^{-2}) x_i = x_j^2 x_i^2.\]

Therefore, all of the generators of $A_n$ commute, and it remains only to show that the generators of $A_n$ are linearly independent.
Suppose by way of contradiction that the generators satisfy the
relation $(x_1^2)^{m_1} \dots (x_n)^{m_n}$, where the $m_i$ are
integers, not all zero.  Without loss of generality, we may assume
$m_1 \ne 0$.
Let $F_n$ denote the free group on $\{x_1,\dots,x_n\}$,
let $D= \langle a,b \mid a^2,b^2\rangle$ denote the infinite dihedral
group, and consider the epimorphism $\theta \colon F_n \to D$ defined
by $\theta x_1 = ba$ and $\theta x_i = b$ for all $i \ge 2$.
Since $\theta(x_i^{-1}x_j^2x_ix_j^2) = 1$ for all $i\ne j$,
we see that $\theta$ induces a homomorphism $\phi \colon G_n \to D$.
Then $\phi(x_1^2) = (ba)^2$ and $\phi(x_i^2) = b^2 = 1$ for $i\ge 2$.
Since $ba$ has infinite order, we have a contradiction and the result
follows.
\end{proof}

We need the following well known result.
\begin{lem} \label{L:freeproduct}
Let $n\in \mathbb{N}$, let $G = \langle x_1,\dots,x_n \mid
x_1^2,\dots x_n^2\rangle$, and let
\[N = \langle x_1x_2,\dots, x_{n-1}x_n\rangle.\]
Then $N\lhd G$, $|G/N| = 2$, and $N$ is freely
generated by $\{x_1x_2,\dots,x_{n-1}x_n\}$.  Furthermore any
nontrivial element of finite order in $G$ is conjugate to one of the
$x_i$.
\end{lem}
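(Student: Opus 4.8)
The plan is to exploit that $G = \langle x_1,\dots,x_n \mid x_1^2,\dots,x_n^2\rangle$ is precisely the free product of $n$ copies of the cyclic group of order two, so that the normal form theorem for free products applies: every nontrivial element of $G$ has a unique reduced expression $x_{i_1}x_{i_2}\cdots x_{i_k}$ with $i_j \ne i_{j+1}$, since the only defining relators are the $x_i^2$. I would use this normal form as the workhorse for all four assertions.

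First I would handle the normality and index together by introducing the parity homomorphism $\epsilon \colon G \to \mathbb{Z}/2\mathbb{Z}$ determined by $\epsilon(x_i) = 1$ for all $i$; this is well defined because each relator $x_i^2$ maps to $0$. Each generator $x_jx_{j+1}$ of $N$ lies in $\ker\epsilon$, so $N \subseteq \ker\epsilon$. For the reverse inclusion, note that $\ker\epsilon$ consists exactly of the elements of even word length, and any such element is a product of terms $x_ax_b$; using $x_c^2 = 1$ I can telescope $x_ax_b = (x_ax_{a+1})(x_{a+1}x_{a+2})\cdots(x_{b-1}x_b)$ (and symmetrically when $a > b$), expressing every $x_ax_b$ through the consecutive products. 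Hence $\ker\epsilon \subseteq N$, so $N = \ker\epsilon$, which immediately gives $N \lhd G$ and $|G/N| = 2$.

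For the free generation I would first observe that $N$ is torsion free: in a free product every element of finite order is conjugate to an element of some factor, and here the factors are the $\langle x_i\rangle \cong \mathbb{Z}/2\mathbb{Z}$, whose nontrivial element $x_i$ has odd length and so lies outside $N = \ker\epsilon$. By the Kurosh subgroup theorem a subgroup of this free product is itself a free product of a free group with conjugates of subgroups of the factors; since $N$ is torsion free no conjugate of a factor survives, so $N$ is free. To pin down the rank I would compute Euler characteristics: $\chi(\mathbb{Z}/2\mathbb{Z}) = 1/2$, and additivity of $\chi - 1$ under free products gives $\chi(G) = 1 - n/2$, whence $\chi(N) = [G:N]\,\chi(G) = 2 - n$. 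Comparing with $\chi = 1 - r$ for a free group of rank $r$ yields rank $n-1$. Since $N$ is then free of rank $n-1$ and is generated by the $n-1$ elements $x_1x_2,\dots,x_{n-1}x_n$, and free groups are Hopfian, this generating set must be a free basis.

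Finally, the statement on elements of finite order is exactly the standard free-product fact already invoked: in $*_i A_i$ every element of finite order is conjugate to an element of some factor $A_i$, and here each $A_i = \langle x_i\rangle$ has $x_i$ as its only nontrivial element. I expect the genuine content to lie in the free generation: the Kurosh/Euler-characteristic route is clean but leans on structure theory, and the main alternative---verifying directly from the normal form that $x_1x_2,\dots,x_{n-1}x_n$ satisfy no reduced relation---would require carefully tracking the interior cancellations in a product of the $x_ix_{i+1}$ and their inverses, which is the one place a naive argument can go wrong.
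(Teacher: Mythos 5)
Your proposal is correct and follows essentially the same route as the paper: the parity homomorphism onto $\mathbb{Z}/2\mathbb{Z}$ to get $N \lhd G$ and $|G/N|=2$, then the Kurosh subgroup theorem to obtain torsion-freeness of $N$, freeness, the conjugacy statement for torsion elements, and the rank count $n-1$, concluding via Hopficity that the $n-1$ generators form a free basis. The only differences are in bookkeeping: you verify $\ker\epsilon \subseteq N$ by telescoping and compute the rank by multiplicativity of the Euler characteristic, where the paper delegates these points to cited references and leaves the final Hopfian step implicit in ``the result follows.''
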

\begin{proof}
We have a well-defined surjective homomorphism $\theta \colon G \to
\mathbb{Z}/2\mathbb{Z}$ defined by $x_i \mapsto \bar{1}$.  Clearly
$N \subseteq \ker\theta$.  It is easily checked that $N\lhd G$ and
that $|G/N| \le 2$.  Therefore $N = \ker\theta$ and we deduce that
$|G/N| = 2$.  We now apply the Kurosh subgroup theorem \cite[Theorem
I.7.8]{Dicks89} and \cite[Exercise 3, p.~212]{Cohen89}.  We find that
$N$ is torsion free and any nontrivial finite subgroup of $G$ is
conjugate to one of the $x_i$.  Furthermore $N$ is free of rank
$n-1$, and the result follows.
\end{proof}

\begin{thm} \label{T:torsionfree}
$G_n$ is torsion free for all nonnegative integers $n$.
\end{thm}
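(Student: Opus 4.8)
The plan is to exploit the normal free abelian subgroup $A_n$ provided by Lemma \ref{L:Free Abelian} together with the structure of the free product in Lemma \ref{L:freeproduct}. First I would dispose of $n=0$ trivially, and for $n \ge 1$ form the quotient $G_n/A_n$. Adjoining the relations $x_j^2 = 1$ makes every defining relation $x_i^{-1}x_j^2 x_i x_j^2$ collapse to the identity, so $G_n/A_n$ is exactly the group $\langle x_1,\dots,x_n \mid x_1^2,\dots,x_n^2\rangle$ appearing in Lemma \ref{L:freeproduct}. This yields a short exact sequence $1 \to A_n \to G_n \to G_n/A_n \to 1$ in which $A_n$ is free abelian and the quotient is a free product of copies of $\Z/2\Z$.

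Next I would take a purported element $g$ of finite order in $G_n$ and examine its image $\bar g$ in $G_n/A_n$. If $\bar g = 1$ then $g \in A_n$, which is torsion free, forcing $g = 1$. If $\bar g \ne 1$ then $\bar g$ has finite order, so by Lemma \ref{L:freeproduct} it is conjugate to some $x_i$; since conjugation preserves order, after replacing $g$ by a conjugate I may assume $\bar g = x_i$, that is $g = x_i a$ with $a \in A_n$.

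The heart of the argument is then a direct computation of $g^2$. Writing $A_n$ additively with basis $x_1^2,\dots,x_n^2$, the relations established in Lemma \ref{L:Free Abelian} show that conjugation by $x_i$ fixes $x_i^2$ and negates each $x_j^2$ with $j \ne i$. Hence if $a = \sum_j m_j x_j^{2}$, one finds $g^2 = x_i^2\,(x_i^{-1}a x_i)\,a = x_i^{2(2m_i+1)}$, a nontrivial element of $A_n$ because $2m_i+1$ is odd. Consequently every even power $g^{2k} = (g^2)^k$ is nontrivial (as $A_n$ is torsion free) and every odd power $g^{2k+1}$ maps to $x_i \ne 1$ in the quotient, so $g$ has infinite order, a contradiction.

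I expect the main obstacle to be the $g^2$ computation: one must track the conjugation action of $x_i$ on $A_n$ carefully enough to see that the two occurrences of $a$ cancel on the coordinates $j \ne i$ and leave only the surviving $x_i^2$ contribution, producing the odd exponent $2m_i+1$ that guarantees $g^2 \ne 1$. Once this is in hand, the torsion-freeness of $A_n$ and the free-product structure of the quotient finish the proof at once.
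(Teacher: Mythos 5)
Your proof is correct and takes essentially the same route as the paper: both invoke Lemma \ref{L:Free Abelian} and Lemma \ref{L:freeproduct} to reduce torsion-freeness of $G_n$ to ruling out torsion among elements lying over some $x_i$ (equivalently, in $\langle A_n, x_i\rangle$). The only divergence is the finish: the paper argues structurally that $B=\langle A_n,x_1\rangle$ is torsion free by exhibiting $D=\langle x_2^2,\dots,x_n^2\rangle$ as a normal subgroup with $B/D\cong\Z$, whereas you compute $g^2=(x_i^2)^{2m_i+1}\ne 1$ directly from the conjugation action of $x_i$ on $A_n$ --- the same odd-exponent mechanism that is implicit in the paper's isomorphism $B/D\cong\Z$, packaged computationally rather than structurally.
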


\begin{proof}
Let $A_n$ be defined as in Lemma \ref{L:Free Abelian}.  Then $G_n/A_n
\cong \langle x_1, \dots, x_n \mid x_1^2, \dots x_n^2\rangle$.
By Lemma \ref{L:freeproduct},
this group has a free subgroup $N/A_n$ of index 2 freely generated by
the images of $x_1 x_2, \dots, x_{n-1} x_n$ in $G_n/A_n$, and any
nontrivial finite subgroup of $G_n / A_n$ is conjugate to $\gen{A_n
x_i} / A_n$ for some $i$. Therefore, to prove that $G_n$ is torsion
free, it will suffice to show that $\gen{A_n, x_i}$ is torsion free
for each $i$. By symmetry, we need only show this for some particular
$i$, so we set $i = 1$ and prove that $B := \gen{A_n, x_1} = \gen{x_1, x_2^2, \dots, x_n^2}$ is torsion free.

Now set $D = \gen{x_2^2, \dots, x_n^2}$. Since $A_n$ is torsion free
abelian and $D \leq A_n$, we have that $D$ is torsion free. It is
also an immediate corollary of the relations on $G_n$ that $D \normal
B$, and then we see that $B/D = \gen{Dx_1}$.  Since $A_n$ is free
abelian on $\{x_1^2,\dots,x_n^2\}$, we find that $Dx_1^2$ has
infinite order in $A_n/D$ and we deduce that $B/D
\cong \Z$, which is torsion free. Since both $D$ and $B/D$ are
torsion free, it follows that $B$ is torsion free, and from previous
arguments we conclude that $G_n$ is torsion free.
\end{proof}

\begin{prop} \label{P:Subgp}
If $1 \le m \le n$ are integers, then $G_m$ embeds in $G_n$.
\end{prop}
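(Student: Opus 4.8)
The plan is to realize each $G_k$ as an extension of a free product of copies of $\Z/2\Z$ by a free abelian group, and then to compare the extensions for $m$ and $n$ by means of the short five lemma.

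First I would note that the assignment $x_i \mapsto x_i$ for $1 \le i \le m$ defines a homomorphism $\iota \colon G_m \to G_n$, since every defining relator $x_i^{-1}x_j^2x_ix_j^2$ of $G_m$ (with $i\ne j$ and $i,j\le m$) is also a defining relator of $G_n$. The entire task is to prove that $\iota$ is injective. To set this up, recall from Lemma \ref{L:Free Abelian} that $A_k \normal G_k$ is free abelian on $\{x_1^2,\dots,x_k^2\}$, and from the proof of Theorem \ref{T:torsionfree} that $G_k/A_k \cong \present{x_1,\dots,x_k}{x_1^2,\dots,x_k^2}$, the free product $W_k$ of $k$ copies of $\Z/2\Z$ analyzed in Lemma \ref{L:freeproduct}. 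Thus for each $k$ we have a short exact sequence
\[1 \to A_k \to G_k \to W_k \to 1.\]

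Since $\iota(x_i^2)=x_i^2$ we have $\iota(A_m)\subseteq A_n$, so $\iota$ restricts to a map $A_m\to A_n$ sending $x_i^2\mapsto x_i^2$ and descends to a map $\bar\iota\colon W_m\to W_n$ sending $\bar x_i\mapsto\bar x_i$; this produces a commutative diagram of the two short exact sequences with vertical maps $A_m\to A_n$, $\iota$, and $\bar\iota$. I would then check that the outer two verticals are injective. The map $A_m\to A_n$ carries the free basis $\{x_1^2,\dots,x_m^2\}$ onto part of the free basis $\{x_1^2,\dots,x_n^2\}$, so it is injective. For $\bar\iota$ I would exhibit a retraction: the assignment $x_i\mapsto x_i$ for $i\le m$ and $x_i\mapsto 1$ for $i>m$ is a well-defined homomorphism $\rho\colon W_n\to W_m$, because each relator $x_i^2$ maps to a relator or to $1$; and $\rho\circ\bar\iota=\mathrm{id}_{W_m}$, whence $\bar\iota$ is injective. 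Applying the short five lemma to the diagram—valid for arbitrary, not necessarily abelian, groups—the injectivity of the two outer maps forces $\iota$ to be injective, giving the desired embedding.

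The point to be careful about, and the reason the argument is routed through the quotients, is that the naive retraction $G_n\to G_m$ sending the extra generators to $1$ is \emph{not} well-defined: a relator $x_i^{-1}x_j^2x_ix_j^2$ with $i>m\ge j$ would map to $x_j^4$, which is nontrivial since $G_m$ is torsion free by Theorem \ref{T:torsionfree}. This obstruction disappears on $W_n$, where squaring is trivial, which is exactly why the retraction argument succeeds at the level of the free products $W_k$ rather than the groups $G_k$ themselves. Once the commutative diagram of extensions is in place, the only genuine input is the injectivity of the two abelian/free-product comparison maps, and the short five lemma supplies the rest.
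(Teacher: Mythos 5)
Your proof is correct and follows essentially the same route as the paper: both use the extension $1 \to A_k \to G_k \to G_k/A_k \to 1$ from Lemma \ref{L:Free Abelian}, check injectivity on the free abelian kernel via bases and on the free-product quotient via the generator-to-generator map, and conclude injectivity of the induced map $G_m \to G_n$. The only differences are presentational: you make explicit the retraction $W_n \to W_m$ where the paper calls the injectivity of $f'$ ``clear,'' and you package the final diagram chase as the short five lemma rather than deducing $\ker f \subseteq A_m$ directly.
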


\begin{proof}
Let $A_m = \gen{x_1^2, \dots, x_m^2}$, a free abelian normal subgroup
of $G_m$ and $A_n = \gen{x_1^2, \dots, x_n^2}$, a free abelian normal
subgroup of $G_n$, by Lemma \ref{L:Free Abelian}.  Let $f\colon G_m \to
G_n$ be the natural homomorphism given by $f(x_i) = x_i$, and
consider the homomorphism $f^\prime \colon G_m / A_m \to G_n / A_n$ induced by $f$, so $f^\prime(A_mx_i) = A_nx_i$.
Since $G_m/A_m \cong \langle x_1,\dots, x_m \mid x_1^2, \dots, x_m^2 \rangle$ and
$G_n/A_n \cong \langle x_1,\dots, x_n \mid x_1^2, \dots, x_n^2 \rangle$, it is
clear that $f'$ is one-to-one and we deduce that $\ker f \subset A_m$,
so it remains to prove that $f$ is injective on $A_m$.  But $A_m$ is
free abelian on $\{x_1^2,\dots,x_m^2\}$ and $A_n$ is free abelian on
$\{x_1^2, \dots, x_n^2\}$ by Lemma \ref{L:Free Abelian},
and the result follows.
\end{proof}

\begin{cor}
For all integers $n \geq 2$, $G_n$ is a nonunique product group.
\end{cor}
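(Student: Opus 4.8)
The plan is to reduce the statement for general $n$ to the already-settled case $n=2$ by means of the embedding supplied in Proposition \ref{P:Subgp}. The anchoring fact is that $G_2$ is isomorphic to the Hantzsche--Wendt group $\present{x,y}{x^{-1}y^2xy^2}$, which Promislow \cite{Promislow88} proved to be a nonunique product group; this is the single external input on which everything rests.

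The key step I would isolate is that the unique product property is inherited by subgroups. This is immediate from the definition, which quantifies only over nonempty finite subsets: if $H \leq G$ and $X,Y \subseteq H$ are nonempty and finite, then an element $g = xy$ with $x \in X$, $y \in Y$ admitting a unique such representation in $G$ automatically lies in $H$ and is witnessed entirely within $H$, so $H$ inherits the property from $G$. Taking the contrapositive, \emph{any group containing a nonunique product subgroup is itself a nonunique product group.} I would state this explicitly, since it is not one of the closure properties recalled in the introduction (those concern normal extensions and direct limits, not arbitrary subgroups).

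With these two ingredients the corollary follows at once: for any $n \geq 2$, Proposition \ref{P:Subgp} embeds $G_2$ into $G_n$, and since $G_2$ is a nonunique product group, $G_n$ must be one as well. I expect no genuine obstacle here, as the combinatorial work has been discharged in Proposition \ref{P:Subgp} and the nonunique-product verification in Promislow's theorem; the only point deserving care is the subgroup-heredity observation, which should be checked directly against the definition rather than quoted from an existing closure result.
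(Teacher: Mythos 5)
Your proposal is correct and follows exactly the paper's argument: the paper also deduces the corollary from Promislow's result that $G_2$ is a nonunique product group together with the embedding $G_2 \hookrightarrow G_n$ of Proposition \ref{P:Subgp}, leaving the subgroup-heredity of the unique product property implicit. Your explicit verification of that heredity (uniqueness of a representation $g=xy$ with $x\in X$, $y\in Y$ depends only on $X$ and $Y$, not on the ambient group) is a sound and worthwhile addition, but it does not change the route.
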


\begin{proof}
It is well known that $G_2$ is a nonunique product group
\cite{Promislow88}.  Since $G_2$ is isomorphic to a subgroup of $G_n$
by Proposition \ref{P:Subgp}, the result follows.
\end{proof}

We now define the Hantzsche-Wendt groups studied by other
authors (called HW groups) as any group $\Gamma$ that is the
fundamental group of an orientable, flat $n$-manifold that has
holonomy group $\Z_2^{n-1}$.  We now state a few basic properties of
HW groups, see \cite[\S 2]{Putrycz07} and \cite[\S 9]{Szc12}
for further details.  We have that $n$ is odd,
$\Gamma$ is generated by elements $\beta_1,
\dots, \beta_n$, and that the translation subgroup of $\Gamma$ is
$\Lambda = \gen{\beta_1^2, \beta_2^2, \dots, \beta_n^2}$.
Then we can write $\beta_i = (B_i, b_i)$, where $b_i \in
\R^n$ such that $[b_i]_j \in \{\frac{1}{2}, 0\}$ (where $[b_i]_j$
denotes the $j$th entry of $b_i$) and the $B_i$ are $n
\times n$ diagonal matrices with all diagonal entries equal to $-1$ except the $(i,i)$ entry, which is equal to 1. Representing $\Gamma$ in this way, we obtain the product formula $(B, b)(C, c) = (BC, Bc + b)$. This product formula gives identities of the form $(A,a)(B,b)(C,c) = (ABC,ABc + Ab + a), (A,a)^{-1} = (A^{-1},-A^{-1}a)$ and $(A,a)^2 = (A^2,Aa+a).$ Identify $e_i$ with $(1,\epsilon_i)$, where $\epsilon_i$ is the $i$th standard basis element of $\mathbb{R}^n$. Then $\beta_i^2 = e_i$, and we have for all $i \not = j$ that
\begin{align*}
\beta_i^{-1} \beta_j^2 \beta_i &= (B_i^{-1},-B_i^{-1}b_i)
(1,e_j)(B_i, b_i)= (1,B_i^{-1}b_i + B_i^{-1}e_j - B_i^{-1}b_i)\\
&= (1,B_i^{-1} e_j) = (1,-e_j) = \beta_j^{-2}
\end{align*}
and therefore $\beta_i^{-1} \beta_j^2 \beta_i \beta_j^2 = 1$, so the $\beta_i$ satisfy the relations on $G_n$. A second consequence of this representation of $\Gamma$ is that $\Gamma \cap \R^n = \Lambda$.

\begin{thm}  \label{T:surjective}
Let $n\in \mathbb{N}$.  Then there is a surjective homomorphism
$\Phi \colon G_{n-1} \to \Gamma$ for all HW groups $\Gamma$
of dimension $n$, and $\ker\Phi$ is a free group.
\end{thm}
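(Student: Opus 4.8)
The plan is to use the obvious map and verify that it does the job. Since the excerpt already checks that $\beta_1,\dots,\beta_n$ satisfy the defining relations of $G_n$, I would define $\Phi\colon G_{n-1}\to\Gamma$ by $x_i\mapsto\beta_i$ for $1\le i\le n-1$; this is well defined because the relations of $G_{n-1}$ form a subset of those satisfied by the $\beta_i$. Write $H=\Phi(G_{n-1})=\gen{\beta_1,\dots,\beta_{n-1}}$ and recall $\Lambda=\gen{\beta_1^2,\dots,\beta_n^2}=\Gamma\cap\R^n$, which is free abelian on $e_1,\dots,e_n$. The two things to establish are that $\Phi$ is onto, i.e.\ $\beta_n\in H$, and that $\ker\Phi$ is free. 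Note that $e_1,\dots,e_{n-1}=\beta_1^2,\dots,\beta_{n-1}^2$ lie in $H$ from the outset.

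For surjectivity I would first reduce to a statement inside $\Lambda$. Because $n$ is odd, $B_1\cdots B_n=I$, so $B_1\cdots B_{n-1}=B_n$, and hence $\eta:=\beta_1\cdots\beta_{n-1}$ has the same rotational part $B_n$ as $\beta_n$; consequently $\eta\beta_n^{-1}\in\Gamma\cap\R^n=\Lambda$. It follows that $\beta_n\in H$ as soon as $e_n\in H$, so it suffices to produce $e_n$. The key computation is that $\beta_i^2=e_i$ forces $[b_i]_i=\tfrac12$, and that for each permutation $\sigma$ of $\{1,\dots,n-1\}$ the element $\beta_{\sigma(1)}\cdots\beta_{\sigma(n-1)}\in H$ again has rotational part $B_n$, with square $e_n^{\,m_\sigma}$ where $m_\sigma=\sum_k(-1)^{k-1}\,2[b_{\sigma(k)}]_n\in\Z$. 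Reading off the $n$th coordinate of $\eta\beta_n^{-1}\in\Lambda$ and using $[b_n]_n=\tfrac12$ shows every $m_\sigma$ is odd. Setting $S=\{i\le n-1:[b_i]_n=\tfrac12\}$, oddness of $m_\sigma$ means $|S|$ is odd, so $0<|S|<n-1$; as $\sigma$ varies, the exponents $m_\sigma$ then sweep out odd integers in steps of $2$ over a range of length at least $2$. Two such exponents are coprime, so the powers $e_n^{\,m_\sigma}\in H$ generate $\gen{e_n}$, whence $e_n\in H$ and $H=\Gamma$.

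The free-kernel assertion I would deduce from structure already in the paper. Restricting $\Phi$ to $A_{n-1}=\gen{x_1^2,\dots,x_{n-1}^2}$ sends $x_i^2\mapsto e_i$, which is injective into the free abelian group $\Lambda$ by Lemma \ref{L:Free Abelian}, so $\ker\Phi\cap A_{n-1}=1$. Hence $\ker\Phi$ injects into $G_{n-1}/A_{n-1}\cong\gen{x_1,\dots,x_{n-1}\mid x_1^2,\dots,x_{n-1}^2}$, a free product of copies of $\Z/2\Z$. Since $G_{n-1}$ is torsion free by Theorem \ref{T:torsionfree}, so is $\ker\Phi$, and a torsion-free subgroup of such a free product is free by the Kurosh subgroup theorem, exactly as in Lemma \ref{L:freeproduct}. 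Thus $\ker\Phi$ is free.

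I expect the main obstacle to be the surjectivity, and within it the bookkeeping for the $n$th coordinate of the translation parts: one must verify that permuting the factors of $\beta_1\cdots\beta_{n-1}$ genuinely moves $m_\sigma$ in steps of $2$ across a range containing at least two values, which rests on $|S|$ being odd (so $0<|S|<n-1$). Once coprime exponents $m_\sigma$ are in hand the containment $e_n\in H$ is immediate, and the free-kernel statement is then routine given Theorem \ref{T:torsionfree} and the Kurosh argument of Lemma \ref{L:freeproduct}.
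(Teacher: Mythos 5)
Your proof is correct, and while its skeleton coincides with the paper's --- the same map $x_i\mapsto\beta_i$, the same reduction of surjectivity to showing $e_n\in\gen{\beta_1,\dots,\beta_{n-1}}$, and an identical kernel argument via $\ker\Phi\cap A_{n-1}=1$, Theorem \ref{T:torsionfree} and the Kurosh subgroup theorem --- the heart of the surjectivity step is genuinely different. The paper obtains the crucial parity fact, that $|\{i\le n-1 : [b_i]_n=\tfrac12\}|$ is odd, by citing \cite[Proposition 2.1]{Putrycz07}, and then chooses a single clever permutation (placing the indices of that set first) so that the alternating sum of $n$th coordinates equals exactly $\tfrac12$, whence the square of the corresponding product is $e_n^{\pm1}$ on the nose. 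You instead derive the parity fact intrinsically: since $B_1\cdots B_{n-1}=B_n$, every product $P_\sigma=\beta_{\sigma(1)}\cdots\beta_{\sigma(n-1)}$ satisfies $P_\sigma\beta_n^{-1}\in\Gamma\cap\R^n=\Lambda$, and since $\beta_n^2=e_n$ forces $[b_n]_n=\tfrac12$, the exponent $m_\sigma$ in $P_\sigma^2=e_n^{m_\sigma}$ must be odd --- in effect you re-prove the relevant case of Putrycz's proposition rather than quote it. You then finish not with one permutation achieving exponent $\pm1$ (which your parity fact would equally permit: put the indices of $S$ first to get $m_\sigma=1$) but with two permutations whose exponents are odd and differ by $2$, hence coprime, so that they generate $\gen{e_n}$; the required existence of two distinct exponent values does follow, as you say, from $0<|S|<n-1$, since the number of elements of $S$ occupying odd positions can then take two consecutive values. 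Both routes are valid: yours makes the theorem self-contained, at the price of the extra coprimality bookkeeping, while the paper's is shorter given the external reference and produces $e_n^{\pm 1}$ directly.
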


\begin{proof}
Let $\Gamma$ be an HW group of dimension $n$.
We know that $n$ must be odd, so write
$n = 2k+1$ for some nonnegative integer $k$.
Define the map $\Phi \colon G_{n-1} \to \Gamma$ by $\Phi(x_i) = \beta_i$, which is a homomorphism since the generators of $\Gamma$ satisfy $\beta_i^{-1} \beta_j^2 \beta_i \beta_j^2 = 1$ whenever $i \not = j$. It follows from the definition of $\Phi$ that $\gen{\beta_1, \dots, \beta_{n-1}} \subseteq \Phi(G_{n-1})$. To show that $\Phi$ is onto $\Gamma$, it suffices to show that $\beta_n \in \gen{\beta_1, \dots, \beta_{n-1}}$.

Set $I = \{1, 2, \dots, 2k\}$. Then by
\cite[Proposition 2.1]{Putrycz07}, we
have that there exists $j \not \in I$ such that $|\{i \in I \mid [b_i]_j
= \frac{1}{2} \}|$ is odd. Since $I$ is defined as a subset of $\{1,
2, \dots, n\}$, it follows that $j = 2k+1 = n$. Now let $\pi \colon I \to I$ be a permutation, and define $P = \prod\limits_{i = 1}^{2k} \beta_{\pi(i)} = \prod\limits_{i=1}^{2k} (B_{\pi(i)}, b_{\pi(i)})$. From repeated application of the multiplication of these pairs, we obtain
\begin{multline*}
P = (B_{\pi(1)} B_{\pi(2)} \cdots B_{\pi(2k)}, b_{\pi(1)} +
B_{\pi(1)} b_{\pi(2)} +\\ B_{\pi(1)} B_{\pi(2)} b_{\pi(3)} + \dots +
B_{\pi(1)} \cdots B_{\pi(2k-1)} b_{\pi(2k)}).
\end{multline*}
Define $v_x = \bigg{(} \prod\limits_{i=1}^{x-1} B_{\pi(i)} \bigg{)} b_{\pi(x)}$. Now the $B_i$ commute (since they are diagonal matrices) so we have $\prod\limits_{i=1}^{2k} B_{\pi(i)} = \prod\limits_{i=1}^{2k} B_i = B_n$, and therefore $P = \bigg{(}B_n, \sum\limits_{i=1}^{2k} v_i\bigg{)}$. We now wish to show that for some permutation $\pi$, the $j$th entry of $\sum\limits_{i=1}^{2k} v_i$ is equal to $\frac{1}{2}$. Then for each $v_i$ we have
\[[v_i]_n = [B_{\pi(1)} \cdots B_{\pi(i-1)} b_{\pi(i)}]_n = (-1)^{i-1} [b_{\pi(i)}]_n\]
since the $(n,n)$th entry of $B_i$ is equal to $-1$ for all $i \in I$. Therefore we have
\[\bigg{[} \sum\limits_{i=1}^{2k} v_i \bigg{]}_n = \sum\limits_{i=1}^{2k} (-1)^{i-1} [b_{\pi(i)}]_n = [b_{\pi(1)} - b_{\pi(2)} + \dots - b_{\pi(2k)}]_n.\]
Earlier it was shown that if $J = \{i \in I \mid [b_i]_n = \frac{1}{2} \}$, then $|J|$ is odd. If $\pi$ satisfies $\pi(\{1, 2, \dots, |J|\}) = J$, which we may assume without loss of generality, then this implies $[b_{\pi(1)} - b_{\pi(2)} + \dots - b_{\pi(2k)}]_n = \frac{1}{2}$. Therefore we may write
\[P = (B_n, a_1 e_1 + a_2 e_2 + \dots + a_n e_n)\]
where $2a_i \in \Z$ for each $i$ and $a_n = \pm \frac{1}{2}$. Therefore,
\[P^2 = (1, B_n(a_1 e_1 + \dots + a_n e_n) + a_1 e_1 + \dots + a_n e_n) = \pm e_n.\]
Thus $e_n \in \gen{\beta_1, \dots, \beta_{n-1}}$, and it
follows that $\Lambda \subset \gen{\beta_1, \dots, \beta_{n-1}}$.
Now, $\beta_1 \beta_2 \dots \beta_n \in \R^n$ because $B_1 B_2 \dots
B_n = I$, and since $\Gamma \cap \R^n = \Lambda$, we have $\beta_1
\beta_2 \dots \beta_n \in \Lambda \subset \gen{\beta_1, \dots,
\beta_{n-1}}$. Thus, $\beta_n \in \gen{\beta_1, \dots, \beta_{n-1}}$, and therefore $\Phi$ must be onto $\Gamma$.

Write $A_{n-1} = \langle x_1^2,\dots,x_{n-1}^2\rangle$ and $K = \ker\Phi$.
Since $A_{n-1}$ is abelian by Lemma \ref{L:Free Abelian}, $A_{n-1}\lhd G_{n-1}$
and $\beta_i^2 = e_i$, we see that $K \cap A_{n-1} = 1$ and therefore $K$ is
isomorphic to a subgroup of $G_{n-1}/A_{n-1} \cong \langle x_1,\dots,
x_{n-1} \mid x_1^2,\dots,x_{n-1}^2 \rangle$.
Furthermore $K$ is torsion free by Theorem \ref{T:torsionfree}, and
we conclude that $K$ is free by the Kurosh subgroup theorem
\cite[Theorem I.7.8]{Dicks89}.
\end{proof}

\begin{rem*}
We note that the groups $G_n$ satisfy the Kaplansky zero divisor
conjecture.  In fact if $k$ is any field, then $kG_n$ can be embedded
in a division ring.  One way to see this is as follows.  Since $G_m$
embeds in $G_n$ for $m\le n$ by Lemma \ref{P:Subgp}, we may assume
that $n$ is odd.  Also $G_n$ is torsion free by Theorem
\ref{T:torsionfree}.  Then by Theorem \ref{T:surjective}, there is a
normal free subgroup $K$ of $G_n$ such that $G_n/K$ is isomorphic to
an HW group of dimension $n$.  Since an HW group is virtually abelian,
we may apply \cite[Theorem 1.5]{Linnell93}, at least in the case
$k=\mathbb{C}$.  However the arguments of \cite[\S 4]{Linnell93} apply
for any field $k$.
\end{rem*}

\bibliographystyle{plain}

\end{document}